\setlist[1]{itemsep=3pt}
\newcommand{\beq}{\begin{equation}}
\newcommand{\eeq}{\end{equation}}
\theoremstyle{plain}
\newtheorem{teorema}{teorema}[section]
\newtheorem{theorem}[teorema]{Theorem}
\newtheorem*{theorem*}{Theorem}
\newtheorem{lemma}[teorema]{Lemma}
\newtheorem{proposition}[teorema]{Proposition}
\theoremstyle{definition}
\newtheorem{definition}[teorema]{Definition}
\newtheorem{remark}[teorema]{Remark}
\newtheorem{notations}[teorema]{Notations}
\newtheorem{remarks}[teorema]{Remark}
\newtheorem{esempio}{esempio}[section]
\newtheorem{example}[esempio]{Example}
\numberwithin{equation}{section}
\let\oldtocsection=\tocsection
\let\oldtocsubsection=\tocsubsection
\let\oldtocsubsubsection=\tocsubsubsection
\renewcommand{\tocsection}[2]{\hspace{-1.2em}\oldtocsection{#1}{#2}}
\renewcommand{\tocsubsection}[2]{\hspace{-.2em}\oldtocsubsection{#1}{#2}}
\renewcommand{\tocsubsubsection}[2]{\hspace{0.8em}\oldtocsubsubsection{#1}{#2}}
\DeclareRobustCommand{\gobblefive}[5]{}
\renewcommand\subsubsection{\@startsection{subsubsection}{3}%
  \z@{.5\linespacing\@plus.7\linespacing}{-.5em}%
  {\normalfont\bfseries}}
\newcommand{\R}{\mathbb{R}}
\newcommand{\dd}{\mathrm{d}}
\newcommand{\rest}[1]{\big\rvert_{#1}} 
\title[]{A proof of the Hamiltonian Thom Isotopy Lemma}
\author{Paolo Antonini}
\address{Dipartimento di Matematica e Fisica "E. De Giorgi", Universit\`a del Salento, Lecce (Italy)
}
\email{paolo.antonini@unisalento.it}
\author{Fabio Cavalletti} \address{Mathematics Area, SISSA, Trieste (Italy)}
\email{cavallet@sissa.it}
\author{Antonio Lerario}
\address{Mathematics Area, SISSA, Trieste (Italy)}
\email{lerario@sissa.it}
\begin{document}
\maketitle
\begin{abstract}
In this note we present a complete proof of the fact that all the submanifolds of a one parameter family of compact symplectic submanifolds inside a compact symplectic manifold are Hamiltonian isotopic. \end{abstract}

\setcounter{tocdepth}{1}

\smallskip

 \section{Introduction}
 In this paper we give a detailed proof of the following statement that is known as a folklore within the symplectic community.
  
 \begin{theorem}\label{symsubmanifolds}
	Let $(M,\omega)$ be a compact symplectic manifold with a smooth family $\{S_t\}_{t\in [0,1]}$ of closed symplectic submanifolds. 
	Then there is a smooth Hamiltonian isotopy $(\rho_t)_{t\in[0,1]}$ such that $\rho_t(S_0)=S_t$, $t\in[0,1]$.
\end{theorem}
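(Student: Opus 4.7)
The plan is to produce a single smooth time-dependent Hamiltonian $H_t\in C^\infty([0,1]\times M)$ such that, for each $t$, the Hamiltonian vector field $X_{H_t}$ restricted to $S_t$ equals the canonical symplectic-normal lift of the intrinsic infinitesimal motion of the family. The Hamiltonian flow of $X_{H_t}$ will then be the isotopy we seek, essentially by construction.

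First I would apply the classical (non-symplectic) Thom isotopy lemma to the total space $\mathcal{S}:=\bigcup_{t\in[0,1]}\{t\}\times S_t\subset[0,1]\times M$ to extract a smooth ambient isotopy $\phi_t\colon M\to M$ with $\phi_0=\id$ and $\phi_t(S_0)=S_t$. Set $V_t:=\dot\phi_t\circ\phi_t^{-1}$; for each $q\in S_t$ the normal class $[V_t(q)]\in T_qM/T_qS_t$ is independent of the choice of $\phi_t$. Since $S_t$ is a symplectic submanifold, there is a canonical splitting $T_qM=T_qS_t\oplus T_qS_t^\omega$, where $T_qS_t^\omega$ is the symplectic orthogonal complement, and this yields a unique smooth lift $N_t(q)\in T_qS_t^\omega$ of $[V_t(q)]$.

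The core step is to produce $H_t\in C^\infty(M)$, smooth in $t$, satisfying (i) $H_t|_{S_t}\equiv 0$ and (ii) $X_{H_t}(q)=N_t(q)$ for every $q\in S_t$. Condition (ii) is equivalent, via $\iota_{X_{H_t}}\omega=dH_t$ (up to a sign convention), to prescribing the covector $dH_t(q)\in T_q^*M$ as $v\mapsto\omega_q(N_t(q),v)$; this covector automatically annihilates $T_qS_t$ because $N_t(q)\in T_qS_t^\omega$, which is exactly the consistency required by (i). I would realize such an $H_t$ by fixing a Riemannian metric on $M$, identifying a tubular neighbourhood of $\mathcal{S}$ inside $[0,1]\times M$ with a neighbourhood of the zero section of the normal bundle of the family, declaring $H_t$ locally to be linear in the normal fibre coordinates with coefficients read off from $\omega(N_t,\cdot)$, and damping it to zero outside the tube via a bump function. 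Compactness of $M$ and of each $S_t$ makes the whole construction globally smooth in $(t,x)$.

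Letting $\rho_t$ be the flow of $X_{H_t}$, condition (ii) implies that the vector field $(\partial_t,X_{H_t})$ on $[0,1]\times M$ is tangent to $\mathcal{S}$ at every point of $\mathcal{S}$, since $T_{(t,q)}\mathcal{S}=\R\,(1,N_t(q))+\{0\}\times T_qS_t$. The flow therefore preserves $\mathcal{S}$, so $\rho_t(S_0)\subset S_t$, and equality follows from dimension and compactness. The isotopy $(\rho_t)$ is Hamiltonian by construction. The main technical obstacle I anticipate is the construction of $H_t$: blending a ``linear in the symplectic-normal directions'' local model globally over $\mathcal{S}$, smoothly in $t$ and with a smooth transition to zero outside the tubular neighbourhood, requires a family version of the tubular neighbourhood theorem together with a careful parametric partition-of-unity argument.
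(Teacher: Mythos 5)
Your proposal is correct and takes a genuinely different, more direct route than the paper. The paper proceeds in two stages: first it upgrades the classical Thom isotopy to a smooth \emph{symplectic} isotopy $\rho_t(S_0)=S_t$ (Proposition~\ref{isosymplectic}, which in turn requires the parametric versions of the Moser trick and of Weinstein's symplectic neighborhood theorem, plus a construction of a family of global $1$-form potentials); then it follows Siebert--Tian, using the symplectic isotopy to transport Darboux-type coordinates along $\widehat{S}$ so that suitable \emph{local} Hamiltonians $H_z$ vanishing on $\widehat{S}$ can be written down explicitly and glued by a partition of unity. You bypass the symplectic isotopy entirely: you observe that the infinitesimal motion of the family has a canonical representative $N_t(q)\in T_qS_t^\omega$ (the symplectic-normal lift of the normal velocity class, which is well defined independently of the choice of Thom isotopy), and you prescribe the $1$-jet of $H$ along $\widehat{S}$ directly, namely $H|_{\widehat S}=0$ and $dH_t(q)=\omega(N_t(q),\cdot)$. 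The decisive point you correctly identify is that $\omega(N_t(q),\cdot)$ annihilates $T_qS_t$ precisely because $N_t(q)$ lies in the symplectic orthogonal, so the $1$-jet data is consistent; an $H$ realizing it is then produced by a standard tubular neighborhood plus bump function, with smoothness in $t$ coming from a parametric tubular neighborhood of the compact $\widehat{S}\subset[0,1]\times M$. Your verification that $(1,X_{H_t}(q))=(1,N_t(q))$ lies in $T_{(t,q)}\widehat S=\R(1,N_t(q))+\{0\}\times T_qS_t$ is exactly the tangency condition the paper checks in coordinates. What your route buys is brevity and a cleaner conceptual explanation of \emph{why} the isotopy can be taken Hamiltonian rather than merely symplectic (the symplectic condition on $S_t$ produces a lift whose contraction with $\omega$ is automatically closed --- indeed zero --- on $S_t$, so no flux obstruction arises). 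What the paper's longer route buys is the parametric Moser and Weinstein theorems as reusable tools, and a closer connection to the partial proofs it sets out to complete; it also avoids any appeal to jet-extension along a submanifold by instead writing the local Hamiltonians in explicit Darboux coordinates.

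One small remark: as you note at the end, the normal class $[V_t(q)]$ is actually intrinsic to the family (it is determined by $T_{(t,q)}\widehat S$ alone), so the Thom isotopy is not logically necessary for your construction --- it is only a convenient device for reading the class off. You could dispense with it and define $N_t(q)$ directly as the unique vector in $T_qS_t^\omega$ such that $(1,N_t(q))\in T_{(t,q)}\widehat S$, which streamlines the argument slightly further.
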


This result is part of the lore of the symplectic community, 
with partial proofs scattered in some places, see for instance \cite{zbMATH05286973,zbMATH01123717, sieberttian}.  More details on the bibliography will be given in  Section \ref{HamThom}. 
The scope of this short note is to present a complete proof, filling eventually all the details.

\noindent{\textbf{Acknowledgements}}
It is a pleasure to thank N. Ciccoli, E. Meinranken and G. Dimitroglou Rizell
 for some useful discussions.  \\

\begin{notations}\nonumber
Below are some notations that we are going to use.

We shall consider time dependent objects (vector fields, differential forms, etc.)
that depend on a second parameter usually called $s\in [0,1]$. 
For instance, 
a time dependent vector field, depending on $s$ is a section $\zeta \in \Gamma([0,1]\times [0,1]\times M,q^*(TM))$, where $q:[0,1]\times [0,1]\times M \to M$ is the projection. We also write this section as
$\zeta:[0,1]_t \times [0,1]_s \times M \longrightarrow TM$ and put  ${\zeta}_t^s=\zeta(t,s)\in \Gamma(M,TM)$ for its values.
 Similar notations will be adopted for other objects as differential forms.

Given a manifold $X$, a submanifold $Y$ (or a subset), a fiber bundle $E$ over $X$ and a map $f:E\to Z$ (e.g. a form or a differential), the symbol $f\restriction Y$ will be used to denote the restriction of $f $ to $E|_Y$. The symbol $\rest{\,}$ is used to denote the classical restriction to subsets.
\end{notations}

\section{Preliminaries: vector fields, isotopies and Thom Isotopy Lemma}
Let $M$ be a smooth manifold; an (ambient) isotopy is a smooth map $\rho:M\times [0,1] \longrightarrow M$ such that $\rho_0=\operatorname{Id}_M$ and $\rho_t$ 
is a diffeomorphism for every $t\in [0,1]$. 
An isotopy is a curve starting from the identity inside $\operatorname{Diff}(M)$.
Differentiating it we get a time dependent vector field
 ${\zeta}_t$ satisfying:
\begin{equation}\label{isotopy}
\dfrac{d}{dt}\rho_t ={\zeta}_t \circ \rho_t, \quad t\in[0,1].\end{equation}
Viceversa, given a time dependent vector field $\zeta_t$ with $t\in [0,1]$, it is possible, under the usual assumptions on the existence of flows, to find a unique isotopy $\rho_t$ satisfying equation \eqref{isotopy}. This is called the {\em{flow}} \footnote{ somewhere called the reduced flow of $\zeta$ to distinguish with the full flow as a time dependent vector field}  of $\zeta_t$. 
If $\zeta$ is compactly supported (i.e. $\zeta_t$ is compactly supported for every $t\in [0,1]$) then the flow exists.

It is worth to give few details on the construction of $\rho_t$ given $\zeta_t$ as it will be used later to construct the Hamiltonian isotopy.
We assume $\zeta_t$, $t\in [0,1]$ is compactly supported and put $$\widehat{M}:= M\times [0,1]\subset M\times \mathbb{R}.$$ This is an auxiliary submanifold with boundary; $\widehat{M}$ will be also used next.
We have projections $\pi:M \times \mathbb{R} \to \mathbb{R}$ and $\pi_M:M \times \mathbb{R}\to M$ and embeddings $e_t:M \to M\times \mathbb{R}$ defined by  
 $e_t (x)=(x,t)$.

Now we
 extend $\zeta_t$
(keeping the same name) to a vector field $\zeta_t \in \Gamma(M \times \mathbb{R},\pi^*(TM))$ with the property that 
there is a fixed $\varepsilon >0$ such that 
$\zeta_t =0$ for $t \notin [-\varepsilon,1+\varepsilon]$. Then we convert it into the 
standard (autonomous) vector field
$$\widehat{\zeta}(x,t):=\zeta_t(x) +\partial_t \in \Gamma(M \times \mathbb{R},T(M\times \mathbb{R})).$$ 
It is clear that $\widehat{\zeta}$ has an ordinary flow (it is bounded) 
$\Phi: (M\times \mathbb{R}) \times \mathbb{R} \longrightarrow M \times 	\mathbb{R}$ such that 
$$\dfrac{d}{ds}\Phi_s ((x,t))=\widehat{\zeta}(\Phi_s(x,t)), \quad \Phi_0=\operatorname{Id}_{M \times \mathbb{R}}.$$
Since the second component of $\widehat{\zeta}$ is $\partial_t$, it follows that $\Phi_s$ has to be in the form $\Phi_s(x,t)=(\Phi^M(x,t,s),s+t)$ for a smooth family of maps $\Phi^M:(M\times \mathbb{R})\times \mathbb{R} \to M$.
In particular the restriction of the flow to $[0,1]$ preserves $\widehat{M}$.
Therefore defining
$\rho_t:=\Phi^M(\cdot,0,t)$ for $t\in [0,1]$ we get the flow of $\zeta_t$. In other words
the flow is defined by the commutative diagram
$$\begin{tikzcd}
	{\widehat{M}} & {\widehat{M}}  \\
	M & M
	\arrow["{\Phi_t}", from=1-1, to=1-2]
	\arrow["{\pi_M}", from=1-2, to=2-2]
	\arrow["{e_0}", from=2-1, to=1-1]
	\arrow["{\rho_t}"', from=2-1, to=2-2]
\end{tikzcd}$$
for $t\in [0,1].$
\subsection{Thom isotopy Lemma}This section is a brief reminder on families of submanifolds and the Thom isotopy Lemma. The material is standard and we are following the lecture notes \cite{larry}.

\begin{definition}\label{smoothfamily}
	A family $\{S_t\}_{t\in [0,1]}$ of submanifolds of a manifold $M$ is smooth when we can find a smooth map $F:M\times [0,1]\to L$ with $L$ a smooth manifold and a submanifold $A \hookrightarrow L$ having the following properties.
	\begin{enumerate}
		\item For every $t\in [0,1]$ the map $f_t:=F(\cdot,t):M \to L$ is transverse to $A$. This means that
		$$df_t(T_xM)+T_{f_t(x)}A=T_{f_t(x)}L, \,\,\,\textrm{at every point} \, x\in M\, \textrm{such that }f_t(x)\in A.$$
		\item $S_t=f_t^{-1}(A)$ for every $t\in [0,1]$.
	\end{enumerate}
	\end{definition}
The transversality assumption in $(1)$ implies that every $f_t^{-1}(A)$ is a smooth submanifold.
\begin{lemma}[Thom isotopy Lemma - basic version]\label{Basicthom}
	Let $M$ be a smooth manifold equipped with a smooth function $F:M \times [0,1] \to \mathbb{R}$ such that:
	\begin{enumerate}
		\item for every $t\in [0,1]$, zero is a regular value of the function $f_t:=F(\cdot,t):M\to \mathbb{R}$.
		\item Every submanifold $S_t=f_t^{-1}(0)$ is compact.
	\end{enumerate}
	Then there exists an isotopy $\rho_t:M \to M$, $t\in [0,1]$ such that for every $t\in [0,1]$:
	$$\rho_t (S_0)=S_t.$$
	The same is true for the manifolds with boundary $f_{t}^{-1}([0,\infty))$.
		\end{lemma}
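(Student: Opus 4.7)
The plan is to lift the problem to the product manifold $\widehat{M} = M \times [0,1]$ introduced in the preliminaries, construct a time-dependent vector field $\zeta_t$ on $M$ whose associated autonomous vector field $\widehat{\zeta} = \zeta_t + \partial_t$ is tangent to the total zero set $\mathcal{S} := \{(x,t)\in \widehat{M} : F(x,t)=0\} = \bigcup_{t\in[0,1]} S_t\times \{t\}$, and then take $\rho_t$ to be the flow of $\zeta_t$.

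First I would observe that $\mathcal{S}$ is a smooth compact codimension-one submanifold of $\widehat{M}$. Indeed, at any $(x,t)\in \mathcal{S}$ we have $df_t(x)\neq 0$ by hypothesis (1), which already makes $dF(x,t)$ surjective; compactness follows from hypothesis (2) together with compactness of $[0,1]$. Next comes the key linear-algebraic observation: the differential relation describing tangency of $\widehat{\zeta}$ to $\mathcal{S}$ is
\begin{equation*}
dF_{(x,t)}(\zeta_t(x) + \partial_t) \;=\; df_t(x)\bigl(\zeta_t(x)\bigr) + \partial_t F(x,t) \;=\; 0 .
\end{equation*}
Since $df_t(x)\colon T_xM \to \mathbb{R}$ is surjective at every point of $\mathcal{S}$, this is a pointwise solvable inhomogeneous linear equation on a neighbourhood of $\mathcal{S}$.

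I would solve it locally and patch. Around each $(x_0,t_0)\in \mathcal{S}$, pick a smooth local vector field $\eta$ on a neighbourhood $U\times I\subset \widehat{M}$ with $df_t(\eta)\equiv 1$; such an $\eta$ exists in local coordinates where $f_t$ has a nonvanishing partial. Setting $\zeta_{t}^{U,I}(x) := -\partial_t F(x,t)\,\eta(x,t)$ gives a local solution of the above equation on all of $U\times I$ (not just on $\mathcal{S}$). By compactness of $\mathcal{S}$ extract a finite subcover $\{U_\alpha\times I_\alpha\}$ and a subordinate smooth partition of unity $\{\chi_\alpha\}$; then $\zeta_t := \sum_\alpha \chi_\alpha \,\zeta_t^{U_\alpha,I_\alpha}$ is a smooth time-dependent vector field, defined on an open neighbourhood $V$ of $\mathcal{S}$, still satisfying $df_t(\zeta_t) = -\partial_t F$ there because the equation is linear in $\zeta$ and each local solution satisfies it identically. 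Finally I would cut off with a bump function equal to $1$ on a smaller neighbourhood $V'\Subset V$ of $\mathcal{S}$ and supported in $V$, producing a compactly supported $\zeta_t$; the tangency condition survives on $V'\supset \mathcal{S}$, which is all that matters.

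With this $\zeta_t$ the flow $\rho_t$ exists by the construction recalled above. Tangency of $\widehat{\zeta}$ to $\mathcal{S}$ implies $\Phi_s(\mathcal{S})\subset \mathcal{S}$, and via the commutative diagram with $\pi_M$ and $e_0$ this translates into $\rho_t(S_0)\subset S_t$; applying the same reasoning to the inverse flow (equivalently to $-\zeta_{1-t}$) yields the reverse inclusion, hence $\rho_t(S_0)=S_t$. For the manifolds-with-boundary statement, I would consider the function $G(x,t) := F(\rho_t(x),t)$: for $x\notin S_0$, $G(x,t)$ can never vanish (otherwise $\rho_t(x)\in S_t = \rho_t(S_0)$ would force $x\in S_0$), so by continuity the sign of $G(x,\cdot)$ is constant on $[0,1]$ and equals the sign of $f_0(x)$, giving $\rho_t\bigl(f_0^{-1}([0,\infty))\bigr) = f_t^{-1}([0,\infty))$. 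The main obstacle I expect is purely bookkeeping: making sure the patching produces a genuinely compactly supported vector field that still satisfies the tangency equation on a full neighbourhood of $\mathcal{S}$, so that the flow exists globally and preserves the relevant level set.
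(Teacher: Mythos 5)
Your proposal is correct and follows essentially the same route as the paper: lift to $\widehat M = M\times[0,1]$, reduce tangency of $\widehat\zeta$ to $F^{-1}(0)$ to the pointwise-solvable affine relation $df_t(\zeta_t) + \partial_t F = 0$, solve it locally using surjectivity of $df_t$ along the zero set, patch with a partition of unity exploiting compactness of $F^{-1}(0)$, and read off $\rho_t$ from the flow of $\widehat\zeta$. The only cosmetic differences are that you enforce compact support by an explicit bump-function cutoff rather than by adjoining the complement of $\widehat Z$ to the cover, and that you supply the short sign-constancy argument for the sublevel-set version which the paper leaves to the reader; both are fine.
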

\begin{proof}
We don't give the full proof but a somewhat	 detailed sketch paying attention to the parts that will be used later.
Define $$\widehat{Z}:=F^{-1}(0)\subset \widehat{M}, $$ then $S_t=f_t^{-1}(0) \cong \widehat{Z}\cap (M\times \{t\}).$ Notice that $\widehat{Z}$ is compact. 
We prove the thesis by constructing a time dependent vector field $\zeta_t$ on $M$ such that the corresponding autonomous vector field 
$\widehat{\zeta}$ on $\widehat{M}$ is tangent to $\widehat{Z}$. Indeed in this case, the flow $\Phi_t$ of $\widehat{\zeta}$ preserves $\widehat{Z}$ while mapping the slice $S_0 \cong \widehat{Z} \cap (M\times \{0\}) $ to the $t$-slice $S_t \cong \widehat{Z} \cap (M\times \{t\}) $. Therefore $$\rho_t=\pi_M\, \circ \Phi_t\, \circ e_0,$$ as defined before is the desired isotopy (with the inverse diffeomorphism provided by the opposite vector field $-\widehat{\zeta}$). 

We are left with the construction of $\zeta_t$; the condition ensuing that $\widehat{\zeta}$ is tangent to $\widehat{Z}$ is equivalent to the equation: 
\begin{equation}\label{vfield}
\dfrac{\partial F}{\partial x}(x,t) \zeta_t(x) + \dfrac{\partial F}{\partial t}(x,t)=0,
\end{equation}
 at every point $(x,t)\in \widehat{Z}$. Since the partial differential $\dfrac{\partial F}{\partial x}(m,t) $ is surjective at every point $(x_0,t_0) \in \widehat{Z}$, this equation can be locally solved. Every point $z_0=(x_0,t_0)\in \widehat{Z}$ has a neighborhood $U_{z_0}$ and a vector field $v_{z_0} \in \Gamma(U_{z_0},\pi_M^*(TM))$ such that \eqref{vfield} holds in $U_{z_0}$. 
 
 Finally the compactness of $\widehat{Z}$ implies that we find a finite cover  $\{ U_{z_1},...,U_{z_n}\}$
 of $\widehat{Z}$ 
  with corresponding vector fields $v_{z_1},...,v_{z_n}$. Put $A:=\widehat{M}\setminus \widehat{Z}$, 
  then $\{A,U_{z_1},...,U_{z_n}\}$ is a cover of the whole $\widehat{M}$.
Let $\{\rho_A,\rho_{z_1},...,\rho_{z_n}\}$ be a subordinated partition of unity; one easily checks that the vector field 
$$\zeta:=\sum_{j=1}^n \rho_{z_j} v_{z_j}$$ is a well defined 
time dependent vector field on the whole of $M$ (i.e. is a vector field on $\widehat{M}$ without the $\partial_t$ - component) 
 and has the desired property that the associated vector field $\widehat{\zeta}$ stays tangent to $\widehat{Z}$.
\end{proof}
\begin{remark}
We have presented the result in the case of a family of hypersurfaces i.e. the map $F$ is real valued. The conclusion remains valid, with the same proof for a smooth family in the sense of Definition \ref{smoothfamily}.
\end{remark}

  \section{The Moser trick}
   We follow the book \cite{zbMATH05286973,zbMATH06638013} for basic facts in symplectic geometry.
 \noindent A symplectic manifold is a couple $(M,\omega)$ where $M$ is a smooth manifold and $\omega \in \Omega^2(M)$ is a closed, non-degenerate $2$-form. Non-degenerateness means that there is and induced isomorphism of vector bundles $TM \longrightarrow T^*M$ by $v \mapsto \omega(v,\cdot)$. In particular $M$ is even dimensional: $\operatorname{dim}M=2k$ and oriented (by $\omega$) because ${\omega^k}/{(k!)}$ is a volume form. 
  
\begin{example}
  A standard example is the cotangent bundle $T^*M$ of every manifold. In this case, the symplectic structure is exact: $\omega=-d\alpha$ where $\alpha \in \Omega(T^*M)$ is a specific $1$-form, the canonical {\emph{Liouville form}} 
  $$\alpha(v)=\varphi(d_{\varphi}\pi(v)),\quad \quad v \in T_{\varphi}(T^*M), \quad \varphi \in T^*M,
$$
  with $\pi:T^*M \to M$ the projection. In local coordinates $(x_1,...,x_n,\xi_1,...,\xi_n):T^*U \to \mathbb{R}^{2n}$ induced by coordinates $(x_1,...,x_n):U \to \mathbb{R}^n$ on the base, we have:
  $\alpha=\sum_i\xi_i dx_i$ so that 
  \begin{equation}\label{Darboux}
  \omega=dx_i \wedge d\xi_i.
  \end{equation}
  \end{example}
  \medskip

The previous example is in some {\em{local sense}} universal. Indeed by the Darboux Theorem \cite[Theorem 8.1]{zbMATH05286973}, every symplectic manifold is locally 
equivalent to the cotangent bundle of an open set in the euclidean space. The equivalence here is given by symplectomorphisms. 
A symplectomorphism $\varphi:(M_1,\omega_1) \longrightarrow (M_2,\omega_2)$ between two symplectic manifolds 
is a diffeomorphism $\varphi:M_1 \to M_2$ such that $\varphi^*(\omega_2)=\omega_1$. A standard way to produce symplectomorphisms is via symplectic or Hamiltonian isotopies. 
 
Now we recall the Moser trick.
This admits several versions and can be regarded more as a method than a single result. 
Here we follow closely \cite{zbMATH06638013} presenting the general procedure in the form of a lemma. In 
Section \ref{HamThom} we will prove the parameter dependent relative form.

\begin{lemma}[Moser trick]
Let 
$M$ be a manifold with a family
$\omega_t \in \Omega^2(M)$, of symplectic forms, defined for $t\in [0,1]$ such that
$$
\dfrac{d}{dt}\omega_t=d\sigma_t, 
$$ 
for every $t\in [0,1]$, where $\sigma_{t}$ is a smooth family of one forms. 
Then there is a unique time dependent vector field $\zeta_t$, $t\in [0,1]$ satisfying Moser equation
	\begin{equation}\label{Mosereq}
		\sigma_t + \iota(\zeta)\omega_t=0
	\end{equation} The flow $\rho_t$ of $\zeta_t$ has the property that at every time $t$ for which it exists:
	$$\rho_t^*\omega_t=\omega_0.$$
	In particular if $M$ is compact $\rho_1:(M,\omega_0) \to (M,\omega_1)$ is a symplectomorphism.
\end{lemma}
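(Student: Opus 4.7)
The plan is to solve for $\zeta_t$ pointwise using non-degeneracy, then verify $\rho_t^*\omega_t=\omega_0$ by differentiating in $t$ and exploiting Cartan's magic formula together with closedness of $\omega_t$.

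First I would address existence and uniqueness of $\zeta_t$. Non-degeneracy of each $\omega_t$ gives a smooth bundle isomorphism $\omega_t^\flat: TM \to T^*M$, $v\mapsto \iota(v)\omega_t$. Hence the Moser equation \eqref{Mosereq} has the unique solution $\zeta_t = -(\omega_t^\flat)^{-1}(\sigma_t)$, and since $\omega_t$ and $\sigma_t$ depend smoothly on $(x,t)$, so does $\zeta_t$. Compactness of $M$ makes $\zeta_t$ automatically compactly supported, so the flow $\rho_t$ exists on all of $[0,1]$ by the discussion in Section 2; in the non-compact case one only claims the identity at those times for which $\rho_t$ is defined.

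The core computation is to show $\frac{d}{dt}(\rho_t^*\omega_t)=0$. Using the standard formula for the derivative of a pullback by a time-dependent flow,
\begin{equation*}
\frac{d}{dt}\rho_t^*\omega_t \;=\; \rho_t^*\!\left(\mathcal{L}_{\zeta_t}\omega_t + \frac{d}{dt}\omega_t\right).
\end{equation*}
By Cartan's formula $\mathcal{L}_{\zeta_t}\omega_t = d\,\iota(\zeta_t)\omega_t + \iota(\zeta_t)\,d\omega_t$, and since $\omega_t$ is symplectic, $d\omega_t=0$. Substituting the Moser equation $\iota(\zeta_t)\omega_t=-\sigma_t$ gives $\mathcal{L}_{\zeta_t}\omega_t = -d\sigma_t = -\tfrac{d}{dt}\omega_t$. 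Therefore the bracket above vanishes and $t\mapsto \rho_t^*\omega_t$ is constant. Since $\rho_0=\mathrm{Id}_M$, this constant value is $\omega_0$, yielding $\rho_t^*\omega_t=\omega_0$ for every $t$ in the interval of existence.

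Finally, when $M$ is compact, $\rho_1$ is a diffeomorphism with $\rho_1^*\omega_1=\omega_0$, i.e., a symplectomorphism $(M,\omega_0)\to(M,\omega_1)$. There is essentially no obstacle here beyond the insight of setting up \eqref{Mosereq}; the only subtlety worth flagging is that without compactness one must restrict the conclusion to the maximal interval on which the flow is defined, which is exactly the caveat stated in the lemma.
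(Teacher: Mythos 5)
Your proof is correct and follows essentially the same route as the paper: solve the Moser equation via the non-degeneracy isomorphism $\omega_t^\flat$, then compute $\tfrac{d}{dt}\rho_t^*\omega_t$ via Cartan's formula and closedness of $\omega_t$ to show the pullback is constant. The only cosmetic difference is that you spell out the inversion $\zeta_t = -(\omega_t^\flat)^{-1}\sigma_t$ and flag the compactness caveat for existence of the flow, both of which the paper leaves implicit.
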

\begin{proof}
The equation \eqref{Mosereq} is solved by the nondegenerateness of any $\omega_t$. Let $\rho_t$ be the flow of the unique solution $\zeta_t$; we have
\begin{align}
	\dfrac{d}{dt}\rho_t^*\omega_t&=\rho_t^*\left(\dfrac{d}{dt}\omega_t +\iota(\zeta_t)d\omega_t+d\iota(\zeta_t)\omega_t\right)\\ &=\rho_t^*(d\sigma_t+d\iota(\zeta_t)\omega_t))\\&=d\rho^*_t(\sigma_t+\iota(\zeta_t)\omega_t)=0.
\end{align}
\end{proof}
\begin{remarks}
We have presented the proof in a somewhat reversed order. Often one first looks for an isotopy such that $\rho_t^*\omega_t=\omega_0$, then by differentiation, Moser equation appears. The main assumption on the differential forms can be written as $\dfrac{d}{dt}[\omega_t]=0$ (de Rham cohomology class). In this case a smooth family of potentials $\sigma_t$ can be found. A quick proof, as indicated in \cite[Theorem 3.2.4]{zbMATH06638013}, uses basic Hodge theory. 
\end{remarks}
The next results are the parameter versions of two consequences of the Moser method. Again we follow closely \cite{zbMATH06638013} checking that all their proofs remain valid with an extra parameter.

\noindent We begin with the trivial observation that the Moser trick can be performed with a parameter. Let $M$ be a (non necessarily compact) manifold and $\omega_t^s$ for $(t,s)\in [0,1]\times[0,1]$ a family of symplectic forms with exact $t$-derivative: 
\begin{equation}\label{exactderivative}
\dfrac{d}{dt}\omega_t^s=d\sigma^s_t,\quad t,s \in [0,1],\end{equation}
for a smooth family $(\sigma^s_t)_{s,t}$ of time dependent $1$-forms. Non degenerateness implies that there is a unique $s$-family of time dependent vector fields solving Moser equation:$$
\iota(\zeta_t^s)\omega_t^s + \sigma_t^s=0.$$
If $M$ is compact we can find a smooth, two parameters family of diffeomorphisms $\psi_t^s$ such that 
$$\psi_0^s = \operatorname{Id}, \quad (\psi_t^s)^* \omega_0^s=\omega_t^s, \quad t,s \in [0,1].$$
In general, the result holds for all the values of $s$ such that the time dependent vector field $\zeta(t,s)$ admits a (local) flow.

The next result is the parametric version of \cite[Lemma 3.2.1]{zbMATH06638013}.
\begin{proposition}[Moser isotopy with parameter, \cite{zbMATH06638013}]\label{Moserisotopy}
Let $M$ be a $2n$-dimensional manifold with a compact submanifold $X$. Assume that $(\omega_0^s)_{s\in [0,1]}, (\omega_1^s)_{s\in [0,1]}$ are two $s$-families of closed $2$-forms such that for every $s\in [0,1]:$ \begin{enumerate}
\item
$\omega_0^s\restriction{X}= \omega_1^s\restriction{X}$ as differential forms (i.e. on the whole $T_XM$)
\item $\omega_0^s\restriction{X}$ and $\omega_1^s\restriction{X}$ are non degenerate on $T_XM$.
\end{enumerate}
Then there exists a neighborhood $U_0$ of $X$, a family of open neighborhoods $\left\{U_s\right\}_{s\in[0,1]}$ of $X$ and a family of diffeomorphisms $\psi^s: U_0 \longrightarrow U_s=\psi^s(U_0)$ such that  
$$\psi^s|_{X}=\operatorname{Id}_X, \quad \omega_1^s=(\psi^s)^*\omega^s_0, \quad s \in [0,1].$$
Moreover choices can be made in a way that $d\psi^s\restriction{X}=\operatorname{Id}_{T_XM}$ for every $s\in [0,1]$.
\end{proposition}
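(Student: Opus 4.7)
The plan is to run Moser's deformation trick in its relative parametric form, localised to a tubular neighbourhood of $X$. The key input replacing the cohomological hypothesis of the global Moser lemma is a parametric relative Poincar\'e lemma applied to $\eta^s := \omega_1^s - \omega_0^s$, which is closed and, by hypothesis~(1), vanishes as a differential form on $T_XM$ for every $s\in[0,1]$.

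First I would fix a tubular neighbourhood $V$ of $X$, identify it with a neighbourhood of the zero section in the normal bundle $NX$, and consider the fibrewise dilation $\phi_r(x,v)=(x,rv)$, $r\in[0,1]$, with generating time-dependent vector field $Y_r$. Since $\eta^s$ vanishes on $T_XM$, the pullback $\phi_0^*\eta^s$ is zero, so the Cartan homotopy formula produces a smooth family
\[
\sigma^s := \int_0^1 \phi_r^*\bigl(\iota(Y_r)\eta^s\bigr)\,dr \in \Omega^1(V), \qquad d\sigma^s=\eta^s.
\]
A direct computation, exploiting that $Y_r$ points in the normal direction and that $\eta^s$ vanishes on $T_XM$, shows that $\sigma^s$ vanishes together with its first partial derivatives at every point of $X$, uniformly in $s$.

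Next I would interpolate by $\omega_t^s:=(1-t)\omega_0^s+t\omega_1^s$ on $V$. By hypothesis~(2) and compactness of $X$ and of $[0,1]^2$, one can shrink $V$ to an open neighbourhood $V'\supset X$ on which $\omega_t^s$ is non-degenerate for every $(t,s)\in[0,1]^2$. The parametric Moser equation $\iota(\zeta_t^s)\omega_t^s+\sigma^s=0$ then admits a unique smooth solution $\zeta_t^s$ on $V'$; the vanishing of $\sigma^s$ and its first derivatives along $X$ transfers to $\zeta_t^s$. By the parametric flow construction recalled in Section~2, applied in the $t$-variable with $s$ as an auxiliary smooth parameter, there is a neighbourhood $U_0\subset V'$ of $X$ on which the two-parameter flow $\psi_t^s$ of $\zeta_t^s$ is defined for all $(t,s)\in[0,1]^2$. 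Setting $\psi^s:=\psi_1^s$ and $U_s:=\psi^s(U_0)$ (composing with inverses, if necessary, to match the direction of pullback in the statement), the standard Moser calculation carried out with $s$ inert gives $(\psi^s)^*\omega_0^s=\omega_1^s$, while $\zeta_t^s|_X\equiv 0$ yields $\psi^s|_X=\operatorname{Id}_X$.

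The main delicate point, and what I expect to be the principal obstacle, is the ``moreover" claim $d\psi^s|_{T_XM}=\operatorname{Id}$. Fixedness on $X$ alone does not suffice: one needs the linearisation of $\zeta_t^s$ transverse to $X$ to vanish, which is precisely where the second-order vanishing of $\sigma^s$ constructed above enters. Differentiating the Moser equation $\iota(\zeta_t^s)\omega_t^s=-\sigma^s$ in a direction normal to $X$ and inverting the non-degenerate form $\omega_t^s|_X$ yields the vanishing of the normal linearisation of $\zeta_t^s$ along $X$, so every $d\psi_t^s$ restricts to the identity on $T_XM$. The bulk of the technical work lies in this vanishing-order bookkeeping and in checking that all constructions are smooth and uniform in the parameter $s$.
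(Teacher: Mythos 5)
Your proof follows the same route as the paper's: construct a primitive $\sigma^s$ of $\omega_1^s-\omega_0^s$ via the tubular-neighborhood homotopy operator (fibrewise dilation), observe that it vanishes to first order along $X$, interpolate $\omega_t^s=\omega_0^s+t\,d\sigma^s$, solve the parametric Moser equation, and flow. Your explicit unpacking of why the first-order vanishing of $\sigma^s$ forces $d\psi^s\restriction{X}=\operatorname{Id}_{T_XM}$ is correct and merely makes explicit what the paper asserts with a citation to Weinstein's appendix.
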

\begin{proof}
The result follows from Moser method once we find a neighborhood $U_0$ of $X$ and a smooth family $(\sigma^s)_{s\in [0,1]}$ of $1$-forms 
on $U_0$
such that
$$\omega^s_1-\omega_0^s=d\sigma^s \,\textrm{on}\,\,U_0\quad \, \textrm{and} \quad \, \sigma^s\restriction{X}=0 \,\,\, \textrm{on}\,\,T_XM,$$ for every $s \in [0,1]$. 

This granted, the family
$$\omega_t^s:=\omega_0^s+td\sigma^s$$  is symplectic on $X$ for every $s,t$. It remains symplectic, for every $s,t$ on an open set $U_0\supset X$ where
 we can find a vector field $\zeta_t^s$ such that $
\iota(\zeta_t^s)\omega_t^s + \sigma^s=0$ for $t,s \in [0,1]$. Moreover the crucial property: $$V_t^s |_{X}=0$$ for every
$t,s$ holds. Shrinking $U_0$ if required, we can apply Moser method (smooth in $s$) around $X$. 

Finally the construction of $\sigma^s$ is performed exactly in the same way as in loc. cit. using the homotopy operator on a tubular neighborhood. Since the difference $\omega_1^s-\omega_0^s$ vanishes identically on $X$, we can find $\sigma^s$ such that all its coefficients with their first partial derivatives vanish on $X$ on every coordinate chart (see \cite[Appendix 1.]{zbMATH04048539}). Such a property implies $d\psi^s\restriction{X}=\operatorname{Id}_{T_XM}$.
\end{proof}
Now comes the Weinstein symplectic neighborhood Theorem, the parametric version of \cite[Theorem 3.4.10]{zbMATH06638013}. 
\begin{proposition}[Weinstein symplectic neighborhood Theorem with parameter]\label{weinstein}
For $j=0,1$ let $(M_i,\omega_i)$ symplectic manifolds with compact symplectic submanifolds $S_i$. 
 Let $\Phi^s:\nu_{S_0} \longrightarrow \nu_{S_1}$, $s\in [0,1]$ be a family of isomorphisms of the corresponding symplectic normal bundles covering a family of symplectomorphisms $\phi^s:(S_0,\omega_0) \longrightarrow (S_1,\omega_1)$. There is a neighborhood $U_0$ of $S_0$ and a family of symplectomorphisms
 $$\psi^s:U_0 \longrightarrow \psi^s(U_0)$$ such that: $\psi^s(U_0)$ is an open neighborhood of $S_1$, 
$\psi^s$ extends $\phi^s$ and $d\psi^s|_{\nu_{S_0}}=\Phi^s$.
\end{proposition}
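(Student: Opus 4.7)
The plan is to reduce the statement to a parametric Moser argument (Proposition \ref{Moserisotopy}) applied at the compact submanifold $X=S_0$. First, fix once-and-for-all (i.e.\ $s$-independent) tubular neighborhood embeddings $\tau_i\colon V_i\to M_i$ of $S_i$, with $V_i\subset\nu_{S_i}$ an open neighborhood of the zero section, chosen so that the derivative along $S_i$ is the identity on $TS_i$ and the canonical inclusion on $\nu_{S_i}$, realizing the symplectic normal bundle as the $\omega_i$-orthogonal complement of $TS_i$ inside $T_{S_i}M_i$. By compactness of $S_0$ and smooth $s$-dependence of $\Phi^s$, shrink $V_0$ so that $\Phi^s(V_0)\subset V_1$ for every $s\in[0,1]$, and set
$$
\tilde\psi^s := \tau_1\circ \Phi^s\circ \tau_0^{-1}\colon \tau_0(V_0)\longrightarrow M_1.
$$
This is a smooth family of diffeomorphisms onto their images that extend $\phi^s$ and satisfy $d\tilde\psi^s|_{\nu_{S_0}}=\Phi^s$, but in general are not symplectic.

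Next, pull back: $\tilde\omega^s:=(\tilde\psi^s)^*\omega_1$ is a smooth family of closed $2$-forms on $\tau_0(V_0)$. The key observation is that $\tilde\omega^s$ and $\omega_0$ agree as $2$-forms on the full bundle $T_{S_0}M_0$ along $S_0$, for every $s$. Indeed, since $S_i$ is a symplectic submanifold of $(M_i,\omega_i)$, the decomposition $T_{S_i}M_i=TS_i\oplus \nu_{S_i}$ is $\omega_i$-orthogonal, so $\omega_i$ is block-diagonal in it. The hypothesis that $\phi^s$ is a symplectomorphism of $(S_0,\omega_0)$ with $(S_1,\omega_1)$ gives equality of the $TS_0$-blocks; the hypothesis that $\Phi^s$ is an isomorphism of symplectic normal bundles gives equality of the $\nu_{S_0}$-blocks; and the orthogonality of the two splittings kills any mixed term for both $\omega_0$ and $\tilde\omega^s$. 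By continuity and compactness of $S_0\times[0,1]$, we may shrink $V_0$ once more to ensure $\tilde\omega^s$ remains non-degenerate throughout $\tau_0(V_0)$ for every $s$.

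Now apply Proposition \ref{Moserisotopy} with $X=S_0$ and the two families $\omega_0^s\equiv\omega_0$ (constant in $s$) and $\omega_1^s:=\tilde\omega^s$. It produces a neighborhood $U_0$ of $S_0$, a family of neighborhoods $U_s$, and a smooth family of diffeomorphisms $\chi^s\colon U_0\to U_s$ with $\chi^s|_{S_0}=\operatorname{Id}_{S_0}$, $d\chi^s|_{T_{S_0}M_0}=\operatorname{Id}_{T_{S_0}M_0}$, and $(\chi^s)^*\tilde\omega^s=\omega_0$. Define $\psi^s:=\tilde\psi^s\circ \chi^s$. Then
$$
(\psi^s)^*\omega_1=(\chi^s)^*(\tilde\psi^s)^*\omega_1=(\chi^s)^*\tilde\omega^s=\omega_0,
$$
so each $\psi^s$ is a symplectomorphism onto an open neighborhood of $S_1$ in $M_1$. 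Moreover $\psi^s|_{S_0}=\tilde\psi^s|_{S_0}\circ\chi^s|_{S_0}=\phi^s$, and by the chain rule together with $d\chi^s|_{T_{S_0}M_0}=\operatorname{Id}$,
$$
d\psi^s|_{\nu_{S_0}}=d\tilde\psi^s|_{\nu_{S_0}}\circ d\chi^s|_{\nu_{S_0}}=\Phi^s\circ\operatorname{Id}=\Phi^s.
$$

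The main potential obstacle is the matching argument in the second paragraph: it quietly relies on the canonical $\omega_i$-orthogonality of the splitting $T_{S_i}M_i=TS_i\oplus\nu_{S_i}$ to rule out off-diagonal discrepancies between $\omega_0$ and $\tilde\omega^s$ along $S_0$, and on the correct interpretation of ``isomorphism of symplectic normal bundles''. Once this is clear, the rest is a mechanical parametric application of the already-established Moser isotopy lemma, with all $s$-regularity issues absorbed into Proposition \ref{Moserisotopy}.
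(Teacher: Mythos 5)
Your proof is correct and follows essentially the same route as the paper: build a non-symplectic family of extensions $\tilde\psi^s=\tau_1\circ\Phi^s\circ\tau_0^{-1}$ using ($s$-independent) tubular/exponential embeddings, check that $(\tilde\psi^s)^*\omega_1$ and $\omega_0$ agree on $T_{S_0}M_0$, apply the parametric relative Moser lemma (Proposition~\ref{Moserisotopy}), and compose. The only difference is that you spell out the block-diagonal matching argument along $S_0$, which the paper leaves implicit, and you include a (harmless but unnecessary) extra shrinking for non-degeneracy of $\tilde\omega^s$, which holds automatically since it is the pullback of a symplectic form by a diffeomorphism.
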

\begin{proof}
Choose Riemannian metrics and open neighborhoods $V_j$ of $S_j$ such that the exponential maps induce diffeomorphisms: $\operatorname{exp}_j: \nu_{S_j} \longrightarrow V_j$. Let $\Psi^s$ be the family of diffeos defined by the diagram:
$$\begin{tikzcd}
	{\nu_{S_0}} & {V_0} \\
	{\nu_{S_1}} & {V_1}
	\arrow["{\operatorname{exp}_0}", from=1-1, to=1-2]
	\arrow["{\Phi^s}"', from=1-1, to=2-1]
	\arrow["{\Psi^s}", from=1-2, to=2-2]
	\arrow["{\operatorname{exp}_1}"', from=2-1, to=2-2]
\end{tikzcd}$$
Then the $\Psi^s$ constitute a family of diffeomorphisms that extend $\phi^s$ and induce $\Phi^s$ (via the normal derivative). In this way, we may apply the Moser Isotopy Theorem, Proposition \ref{Moserisotopy} to $V_0$ equipped with the forms $\omega_0$ and $(\Psi^s)^*\omega_1$. Such forms are equal on the whole $T_{S_0}M_0$.
Of course to preserve the normal data the diffeomorphism provided by the Moser isotopy should be chosen with differential equal to the identity on the submanifold.
\end{proof}



%

\section{Hamiltonian Thom isotopy} \label{HamThom}

\noindent Let $(M,\omega)$ be a symplectic manifold. For every function $f\in C^{\infty}(M)$, the Hamiltonian vector field associated to $f$ is defined uniquely by the equation:
$$\iota(V_f)\omega=df.$$
The flow of an Hamiltonian vector field preserves the symplectic structure. A vector field is said Hamiltonian if arises from a function in this way.

Let now $\rho:M\times [0,1]\to M$ be an isotopy; if every $\rho_t$ is a symplectomorphism then $\rho$ is called a {\em{symplectic isotopy}}. A 
special class of symplectic isotopies are the {\em{Hamiltonian isotopies}},
the ones with $\zeta_t$ Hamiltonian for every $t$.
 In this case a smooth time dependent Hamiltonian $H:M\times [0,1]\longrightarrow \mathbb{R}$ such that $\iota({\zeta_t})\omega =dH(\cdot,t)$
 can be found.
 Of course if $H^1(M,\mathbb{R})=0$, every symplectic isotopy is Hamiltonian.
 
\noindent Here we prove Theorem \ref{symsubmanifolds}. We state it again for ease of reading.
  \begin{theorem}
	Let $(M,\omega)$ be a compact symplectic manifold with a smooth family $\{S_t\}_{t\in [0,1]}$ of closed symplectic submanifolds. Then there is a smooth Hamiltonian isotopy $(\rho_t)_{t\in[0,1]}$ such that $\rho_t(S_0)=S_t$, $t\in[0,1]$.
\end{theorem}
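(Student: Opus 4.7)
The plan is to combine the basic Thom isotopy lemma (Lemma \ref{Basicthom}) with the parametric Weinstein symplectic neighborhood theorem (Proposition \ref{weinstein}) to produce a local symplectic isotopy carrying $S_0$ to $S_t$, and then to upgrade this into a global Hamiltonian isotopy by a tubular-neighborhood-plus-cutoff construction.

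First, I would apply Lemma \ref{Basicthom} to the smooth family $\{S_t\}$ to obtain a (non-symplectic) ambient isotopy $\phi_t$ of $M$ with $\phi_t(S_0)=S_t$. Although $\phi_t$ is not itself the desired Hamiltonian isotopy, it supplies the auxiliary data needed to feed Proposition \ref{weinstein}. The restrictions $\phi_t|_{S_0}\colon S_0\to S_t$ are diffeomorphisms but not symplectomorphisms, so I would first correct them: the smooth family $(\phi_t|_{S_0})^{*}(\omega|_{S_t})$ of symplectic forms on the compact $S_0$ is cohomologous to $\omega|_{S_0}$ by homotopy invariance (the $\phi_s|_{S_0}$ being smoothly homotopic as maps $S_0\to M$), so Moser's trick on $S_0$ yields diffeomorphisms $\alpha_t$ of $S_0$ such that $\phi^t:=\phi_t\circ\alpha_t\colon S_0\to S_t$ is a symplectomorphism. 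Next I would lift this to a smooth family of symplectic bundle isomorphisms $\Phi^t\colon \nu_{S_0}\to \nu_{S_t}$ of the symplectic normal bundles, starting from $d\phi^t$ restricted to the symplectic orthogonal complement and applying a fibrewise polar decomposition (using contractibility of the space of compatible structures).

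Feeding the pair $(\phi^t,\Phi^t)$ into Proposition \ref{weinstein} produces a smooth family of symplectic embeddings $\psi^t\colon U_0\hookrightarrow M$ from a fixed tubular neighborhood $U_0$ of $S_0$, with $\psi^0=\operatorname{Id}_{U_0}$ and $\psi^t(S_0)=S_t$. Let $\zeta_t$ denote the time-dependent generator of $\psi^t$, a symplectic vector field defined near $S_t$. To realize the motion of $S_t$ by a Hamiltonian flow, I would exploit the fact that only the component of $\zeta_t|_{S_t}$ normal to $S_t$, namely the normal velocity $n_t$ of the family, is relevant for the set-theoretic identity $\rho_t(S_0)=S_t$. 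Using the Weinstein chart to identify a neighborhood of $S_t$ with a neighborhood of the zero section in $\nu_{S_t}$ (where $\omega$ has its Darboux-type normal form), define a time-dependent function $H_t^{\loc}$, linear along the fibres, whose Hamiltonian vector field on $S_t$ has normal component $n_t$. Multiplying by a smooth time-dependent cutoff $\chi_t$ equal to $1$ in a smaller neighborhood of $S_t$ and supported in $\psi^t(U_0)$, and extending by zero, produces a global time-dependent Hamiltonian $H_t$ on $M$. Since $\chi_t\equiv 1$ near $S_t$, one has $V_{H_t}|_{S_t}\equiv n_t \pmod{T S_t}$, so the Hamiltonian flow $\rho_t$ of $H_t$ satisfies $\rho_t(S_0)=S_t$.

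The main obstacle is precisely this last step, the passage from a local symplectic isotopy to a global Hamiltonian one: the closed $1$-form $\iota(\zeta_t)\omega$ need not be exact, and $\zeta_t$ is only defined near $S_t$. The localized-Hamiltonian-plus-cutoff construction resolves both issues simultaneously, at the cost of altering $\zeta_t$ in directions tangential to $S_t$ (harmless for the submanifold-level identity $\rho_t(S_0)=S_t$) and in the cutoff transition region (harmless because the family $S_t$ stays in the region where $\chi_t\equiv 1$, which can be chosen uniformly in $t$ by compactness of $[0,1]$ and of each $S_t$).
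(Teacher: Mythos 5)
Your proof is correct, and the broad strategy (Thom isotopy $\to$ Moser on $S_0$ $\to$ lift to symplectic normal bundles $\to$ parametric Weinstein $\to$ local Hamiltonian that reproduces the normal velocity $\to$ cut off, using the vanishing of the local Hamiltonian along the submanifold) agrees with the paper's, but the route from the Weinstein chart to the global Hamiltonian differs in two ways worth comparing.

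First, the paper goes through an intermediate step, Proposition~\ref{isosymplectic}: it extends the local Weinstein symplectomorphisms $\psi^t$ to an arbitrary global diffeotopy $\mu_t$ of $M$, then runs a cohomological correction (finding potentials $\beta_t$, adjusting them by the retraction $\iota_0\circ\pi$ to $\gamma_t$, and solving a Moser equation) to upgrade $\mu_t$ to a \emph{global} symplectic isotopy $\rho_t$ of $M$ with $\rho_t(S_0)=S_t$; only then is $\rho_t$ fed into a local Darboux/Weinstein chart to build the local Hamiltonians. You skip this global symplectization entirely and work directly with the local generator $\zeta_t$ of $\psi^t$ near $S_t$, exploiting the fact that only its normal component $n_t$ matters for tangency to $\widehat S$. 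This is a genuine shortcut: the cohomological work in Proposition~\ref{isosymplectic} is there to globalize a symplectic vector field, a step your cutoff makes unnecessary (the discrepancy between $\zeta_t$ and a Hamiltonian vector field on $\psi^t(U_0)$ is absorbed tangentially along $S_t$ and killed by $\chi_t$ elsewhere). Second, the paper covers $\widehat S$ by small coordinate charts and glues the $H_z$ with a partition of unity over $\widehat M$, whereas you use one Weinstein chart per time slice and a single family of cutoffs $\chi_t=\chi\circ(\psi^t)^{-1}$; both rely on the same crucial vanishing of the local Hamiltonian on $\widehat S$ so that cutoff/partition derivatives contribute nothing there. Your approach is arguably leaner; the paper's has the side benefit of establishing the intermediate global symplectic isotopy (Auroux's result with smooth parameter) as a standalone statement. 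One small point you should spell out for completeness: when constructing $H^{\mathrm{loc}}_t$ fibrewise-linear, the required coefficient is the section $\mu_t:=\iota(n_t)\omega\rest{\nu^\omega_{S_t}}$ of $(\nu^\omega_{S_t})^*$, and the computation that $V_{H^{\mathrm{loc}}_t}\rest{S_t}$ has normal component $n_t$ uses that at the zero section of the Weinstein model the tangent space splits symplectically as $T_qS_t\oplus\nu^\omega_qS_t$.
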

\noindent As said in the introduction this is part of the lore of the symplectic community. 
Let's list shortly the main references.
\begin{itemize}
\item[-] C. Da Silva \cite{zbMATH05286973} states the result without the Hamiltonian property of the isotopy i.e. $\rho_t$ is a symplectic isotopy.
\item[-] Auroux \cite[Proposition 4]{zbMATH01123717} shows that there is a {\em{continuous}} family of symplectomorphisms $(\rho_t)_t$ such that $\rho_0=\operatorname{Id}$ and $\rho_t(S_0)=S_t$, $t\in [0,1]$. By checking that the Moser method and its consequences, as the Weinstein symplectic neighborhood, can be performed with respect to a parameter, the same proof shows that the family of symplectomorphisms is smooth. (Actually it seems that this proof requires such smoothness.)
\item[-] Siebert and Tian \cite[Proposition 0.3]{sieberttian} prove Theorem  \ref{symsubmanifolds} in dimension $4$. Their proof uses complex coordinates, a fact that can be replaced by the existence, a priori, of a symplectic isotopy $\rho_t$ mapping $S_0$ to $S_t$.
 \end{itemize}

\noindent
We will prove Theorem \ref{symsubmanifolds} with the following steps:
\begin{enumerate}
\item we first show that the  proof of Auroux includes smooth dependence in time; 
\item adapt Siebert--Tian proof given a smooth symplectic isotopy $\rho_t$ for granted.
\end{enumerate}
\noindent We are ready to add smoothness to the proof of Auroux \cite[Proposition 4]{zbMATH01123717}.

\begin{proposition}\label{isosymplectic}
	Let $(M,\omega)$ a compact symplectic manifold with a smooth family $\{S_t\}_{t\in [0,1]}$ of closed symplectic submanifolds. 
	Then there is a smooth symplectic isotopy $(\rho_t)_{t\in[0,1]}$ such that $\rho_t(S_0)=S_t$, $t\in[0,1]$.
\end{proposition}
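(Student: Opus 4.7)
The plan is to combine the Thom Isotopy Lemma with a parametric Moser argument. First, apply Lemma~\ref{Basicthom} in its general-codimension form (as indicated by Definition~\ref{smoothfamily} and the remark after the lemma) to obtain a smooth ambient isotopy $f_t : M \to M$ with $f_0 = \operatorname{Id}$ and $f_t(S_0) = S_t$. Set $\omega_t := f_t^* \omega$: a smooth family of symplectic forms on $M$, all cohomologous (because $f_t$ is isotopic to the identity) and each making $S_0$ a symplectic submanifold. The problem reduces to producing an isotopy $\theta_t : M \to M$ with $\theta_t^* \omega_t = \omega$ and $\theta_t(S_0) = S_0$: the composition $\rho_t := f_t \circ \theta_t$ will then be a symplectic isotopy with $\rho_t(S_0) = S_t$.

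Next I will adjust $f_t$ so that $f_t^* \omega$ coincides with $\omega$ in a neighborhood of $S_0$, not merely in cohomology. A compact Moser argument on $S_0$ applied to the smooth cohomologous family of symplectic forms $(f_t|_{S_0})^*(\omega|_{S_t})$ produces isotopies $g_t$ of $S_0$ such that $\varphi^t := f_t|_{S_0} \circ g_t$ is a symplectomorphism $(S_0, \omega|_{S_0}) \to (S_t, \omega|_{S_t})$, smoothly in $t$. Together with a smooth family $\Phi^t : \nu_{S_0} \to \nu_{S_t}$ of symplectic normal-bundle isomorphisms covering $\varphi^t$ (built from the normal derivative of $f_t \circ g_t$ followed by a bundle-level symplectic correction), Proposition~\ref{weinstein} yields local symplectomorphisms $\psi^t : U_0 \to V_t$ extending $\varphi^t$. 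Interpolating the generating time-dependent vector fields of $\psi^t$ and $f_t$ by a cutoff function supported near $S_0$ produces a smooth global isotopy $\tilde f_t$ that equals $\psi^t$ on a smaller neighborhood $U_0' \Subset U_0$ and equals $f_t$ outside $U_0$. The form $\tilde\omega_t := \tilde f_t^* \omega$ then coincides with $\omega$ on $U_0'$.

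Finally I run Moser for the family $\tilde\omega_t$. The exact 2-form $\beta_t := \frac{d}{dt}\tilde\omega_t$ vanishes on $U_0'$ and, on the compact $M$, admits a smooth family of global primitives $\eta_t$ by Hodge theory. The key requirement is a primitive $\sigma_t$ of $\beta_t$ with $\sigma_t|_{T_{S_0}M} = 0$: once this is arranged, the vector field $\zeta_t$ defined by $\iota(\zeta_t)\tilde\omega_t + \sigma_t = 0$ vanishes identically on $S_0$ (by non-degeneracy of $\tilde\omega_t|_{T_{S_0}M}$), so its flow $\theta_t$ fixes $S_0$ pointwise and satisfies $\theta_t^* \tilde\omega_t = \omega$ by the standard Moser calculation; then $\rho_t := \tilde f_t \circ \theta_t$ is the desired symplectic isotopy.

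The hard part is the construction of $\sigma_t$ with $\sigma_t|_{T_{S_0}M} = 0$ globally. The proof of Proposition~\ref{Moserisotopy} gives, via a homotopy operator on a tubular neighborhood of $S_0$, a local primitive $\sigma_t^{\mathrm{loc}}$ on a neighborhood of $S_0$ such that $\sigma_t^{\mathrm{loc}}|_{T_{S_0}M} = 0$. On the overlap, $\eta_t - \sigma_t^{\mathrm{loc}}$ is closed; to absorb this difference one modifies $\eta_t$ by a smooth family of exact forms $d h_t$ chosen so that $d h_t|_{T_{S_0}M}$ matches $\eta_t|_{T_{S_0}M}$ (a relative Poincar\'e lemma with parameters, feasible because $\beta_t|_{T_{S_0}M} = 0$). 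Patching via a cutoff then gives $\sigma_t$ globally. This is the technical heart of the argument and is where the smooth-in-$t$ dependence of all the preceding constructions is used in an essential way.
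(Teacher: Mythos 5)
Your overall architecture matches the paper's: Thom isotopy, parametric Moser stability on $S_0$, a normal-bundle isomorphism, parametric Weinstein, an ambient extension $\mu_t$ (your $\tilde f_t$), and a final Moser correction in which the generating vector field must respect $S_0$. There is, however, a genuine gap in the last step, and it is precisely where the paper's proof is careful.

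You ask for a global primitive $\sigma_t$ of $\beta_t$ with $\sigma_t\restriction{T_{S_0}M}=0$, so that the Moser field $\zeta_t$ \emph{vanishes} on $S_0$ and the correcting flow fixes $S_0$ pointwise. This condition is too strong and in general unachievable. Any two global primitives of $\beta_t$ differ by a \emph{closed} $1$-form, so the class $[\iota_0^*\sigma_t]\in H^1(S_0,\R)$ is determined modulo the image of the restriction map $H^1(M,\R)\to H^1(S_0,\R)$. That image need not cover $[\iota_0^*\eta_t]$: for instance if $M$ is simply connected and $S_0$ has $b_1(S_0)>0$ (a genus $g\ge 1$ curve in $\CP^2$, say), the restriction map is zero, and $[\iota_0^*\eta_t]$ --- essentially the flux of $\varphi^t$ through loops in $S_0$ --- has no reason to vanish. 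Modifying $\eta_t$ by $\dd h_t$ as you propose changes $\iota_0^*\eta_t$ only by exact forms, so it cannot remove a nontrivial cohomology class; the ``relative Poincar\'e lemma'' you invoke is a local statement near $S_0$ and does not discharge the global obstruction.

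The paper avoids this by imposing the weaker, and in fact sufficient, condition that $\zeta_t\restriction{S_0}$ be merely \emph{tangent} to $S_0$, i.e.\ $N^\omega S_0\subset\ker\alpha_t$ at points of $S_0$ (not $\alpha_t\restriction{T_{S_0}M}=0$). It achieves this by taking $\gamma_t:=(\iota_0\circ\pi)^*\beta_t$ on a tubular neighborhood with $T_x\pi^{-1}(x)=N_x^\omega S_0$; this kills the normal directions while leaving $\gamma_t\restriction{TS_0}=\beta_t\restriction{TS_0}$ untouched, and crucially $[\gamma_t]=[\beta_t\restriction{U_0}]$ in $H^1(U_0,\R)$ because $\iota_0\circ\pi$ is homotopic to the identity. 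The resulting closed difference is therefore exact, and the construction goes through globally. Your composition $\rho_t=\tilde f_t\circ\theta_t$ still yields $\rho_t(S_0)=S_t$ even if $\theta_t$ only preserves $S_0$ as a set rather than fixing it pointwise, so replacing your final condition by the paper's tangency condition (and $\gamma_t$ construction) repairs the argument. You should also note explicitly that your interpolated isotopy $\tilde f_t$ still satisfies $\tilde f_t(S_0)=S_t$: this holds because both generating fields are tangent to $\widehat S$ in the sense of Lemma~\ref{Basicthom}, hence so is any convex combination, but the claim should not be left implicit.
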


\begin{proof}
	Denote with $\iota_t:S_t \hookrightarrow M$ the inclusions. 	
	By  the Thom isotopy Lemma there is a smooth isotopy 
	$\varphi: M\times [0,1] \longrightarrow M$ with $\varphi_t(S_0)=S_t$ that combined with
	Moser stability (with smooth parameter) produces a smooth family of symplectomorphisms
	$$\psi^t:(S_0,\iota_0^*\omega) \longrightarrow (S_t,\iota_t^*\omega).$$
	Such a family is covered by a 
 smooth family  $N^{\omega}S_0 \longrightarrow N^{\omega}S_t$ of isomorphisms between the
	symplectic normal bundles of the submanifolds $S_t$. 
	Indeed one first considers the principal bundle $P \longrightarrow S_0 \times [0,1]$ of the normal symplectic frames of the submanifolds; the structure group being the symplectic group. 
	Parallel transport with respect to a connection on $P$ gives a family of isomorphisms of principal bundles $P\rest{S_0}\longrightarrow P\rest{S_t}$. This induces isomorphisms on all the associated bundles such as the symplectic normal bundles. By proposition \ref{weinstein} we end up with a smooth family $\psi^t: U_0 \longrightarrow U_t$ of symplectomorphisms between tubular neighborhoods $U_t \supset S_t$. 
	Let now $\mu_t:M \longrightarrow M$ be any smooth family of diffeomorphisms of the ambient extending $\psi^t$ and, following closely \cite{zbMATH01123717}, put:
	 $$\omega_t:=\mu_t^*\omega, \quad \Omega_t=-\dfrac{d}{dt}\omega_t.$$
	 Assume we can find a vector field $\zeta_t$ such that:
	 \begin{itemize}
	 \item[-] the forms $\alpha_t:=\iota(\zeta_t)\omega_t$ satisfy $d\alpha_t=\Omega_t$,  
	 \item[-] $\zeta_t \rest{S_0}$ is tangent to $S_0$ for every $t$.
	 \end{itemize}
Then the proof is completed for if $\sigma_t$ denotes the flow of $\zeta_t$, let $\rho_t:= \mu_t \circ \sigma_t$; we have $\mathcal{L}_{\zeta_t}\omega_t=\Omega_t$ by the Cartan formula so that:
$$\dfrac{d}{dt}\rho_t^*\omega=
\dfrac{d}{dt}(\sigma_t^*\omega_t)=
(\sigma_t)^*\left( \dfrac{d}{dt}\omega_t+\mathcal{L}_{\zeta_t}\omega_t\right)=0.$$
This means that $\rho_t$ is a family of symplectomorphisms mapping $S_0$ to $S_t$.	

Let us show how to find $\zeta_t$ or equivalently $\alpha_t$. By construction $\omega_t$ is constant on $U_0$ so that the condition: $\zeta_t\rest{S_0}\in \Gamma(TS_0)$ means exactly that $N^{\omega}S_0 \subset \operatorname{Ker} \alpha_t$. Now all the $\omega_t$ are cohomologous, which implies $[\Omega_t]=0$ in $H^2(M,\mathbb{R})$. A smooth family of potentials $\beta_t\in \Omega^1(M)$ on the entire $M$,
 such that $d\beta_t=\Omega_t$ can be found.
The smoothness following exactly by the argument in the proof of \cite[Theorem 3.17]{zbMATH06638013}.
On $U_0$ we have $d\beta_t=\Omega_t=0$ i.e. the forms $\beta_t$ define classes in $H^1(U_0,\mathbb{R})$. Now, let $\pi: U_0 \longrightarrow S_0$ 
the projection of a tubular neighborhood 
satisfying $T_x\pi^{-1}(x)=N_x^{\omega}S_0$ at every point $x\in S_0$ and define: $$\gamma_t:=(\iota_0 \circ \pi)^*\beta_t\in \Omega^1(U_0)$$ with $\iota_0:S_0 \hookrightarrow M$ the inclusion. Since $(\iota_0 \circ \pi)^*$ is the identity in cohomology we have $[\gamma_t]=[\beta_t \rest{U_0}]$ in $H^1(U_0,\mathbb{R})$. It follows that there is a smooth family of functions $f_t\in C^{\infty}(U_0,\mathbb{R})$ with $\gamma_t=\beta_t+df_t$ in $U_0$. Let $g_t$ be any family of functions on the whole $M$ extending $f_t$ and put $\alpha_t:=\beta_t+dg_t$. We have found our $\alpha_t$. Indeed $d\alpha_t=d\beta_t=\Omega_t$ and from $\alpha_t \restriction{U_0}=\gamma_t$ it follows that $N_x^{\omega}S_0 \subset \operatorname{ker}\left( \alpha_t\restriction{x}\right)$ at every point $x$ in $S_0$.
		\end{proof}

We finally prove Theorem \ref{symsubmanifolds} following the idea of the proof of \cite[Proposition 0.3]{sieberttian}.
\begin{proof}[Proof of Theorem \ref{symsubmanifolds}]
Following the proof of lemma \ref{Basicthom}, it is enough to construct a Hamiltonian function $H:M\times I\to \R$ such that,
with $h_t:=H(\cdot,t):M \to \mathbb{R}$, the vector field 
 $$\widehat{V}_{h_t}=V_{H_t}+\partial_t$$ is tangent to $\widehat{S}=\{(x,t)\in M\times[0,1]: \, x\in S_t\}$. To this end, for every $z=(p_0,t_0)\in M\times I$ we will find a neighborhood $U_z=U_{p_0}\times U_{t_0}$ and a local Hamiltonian function $H_z: U_z\to \R$ such that, again denoting by $h_{z,t}:=H_z(\cdot, t):U_{p_0}\to \R$, two conditions are satisfied: 
\begin{enumerate}
\item $V_{h_{z,t}}+\partial_t$ is tangent to $ U_z\cap \widehat{S}$,
\item $H_z$ vanishes on $U_z\cap \widehat{S}$.
\end{enumerate}
The global Hamiltonian $H$ will then be defined using a partition of unity $\{\theta_z\}_{z\in M\times I}$ subordinated to the cover $\{U_z\}_{z\in M\times I}$, i.e. we will set $H:=\sum_{z}\theta_zH_z$. Its Hamiltonian vector field (at every fixed $t$) for such $H$ will satisfy:
 \beq \label{eq:sumrho} \iota({V_{h_t}})\omega=\sum_{z}\dd \theta_z h_{z,t}+\theta_z\dd h_{z,t}.\eeq
 Condition (2) ensures that along $\widehat{S}$ the first summand in \eqref{eq:sumrho} vanishes; together with condition (1), this ensures that $V_{h_t}$ is tangent to $\widehat{S}$.
 
 We proceed now with the construction of such  functions $H_z$
using a  symplectic isotopy $\rho_t:M \longrightarrow M$ with $t\in [0,1]$ such that $\rho_t(S_t)=S_0$. This is (or better its inverse) provided by  proposition \ref{isosymplectic}. 
  
 For every point $z=(p_0, t_0)$ in the open set $(M\times [0,1])\setminus \widehat{S}$ we pick a small neighborhood $U_z=U_{p_0}\times U_{t_0}$ disjoint from $\widehat{S}$ and  we set $H_z\equiv0$.
 
 For every point $z=(p_0, 0)\in \widehat{S}$ we proceed as follows. Since $S_{0}$ is symplectic, we can find a neighborhood $U_{p_0}\subset M$ and coordinates 
 $\Theta:U_{p_0}\longrightarrow \R^{n}_x \times \mathbb{R}^n_y$ such that $U_{p_0}\cap S_{0}$ is described by $\left\{x_j=y_j=0,\, j=m+1,...,n\right\}$ and
 \beq\label{eq:symplcoord} \omega=\sum_{i=1}^{n}\dd x_i\wedge \dd y_i\quad \textrm{with}\quad e_0^* (\omega)=\sum_{i=1}^{m}\dd x_i\wedge \dd y_i.\eeq
Indeed this is (again) consequence of Weinstein symplectic neighborhood Theorem, or better it's non compact version  \cite[Theorem 15.2]{zbMATH04048539}. One first locally trivializes the symplectic normal bundle, then applies the symplectic neighborhood Theorem to split the symplectic structure as a product,
 then applies the standard Darboux Theorem on the two factors.
 In alternative, the Carath\'eodory--Jacobi--Lie Theorem \cite[Theorem 17.2]{zbMATH04048539} can be used.
 
 The above coordinate system induces coordinates in $\widehat{M}$ of the form:
 $$\widehat{\Theta}:{U}_z:=\left\{(p,t): \rho_t(p)\in U_{p_0} \right\} \longrightarrow \mathbb{R}^n_{\widehat{x}} \times \mathbb{R}^n_{\widehat{y}} \times [0,1]_{\tau}$$ with:
 $$\widehat{x}_i(p,t):=x_i(\rho_t(p)), \quad \widehat{y}_i(p,t):=y_i(\rho_t(p)), \quad \tau(p,t)=t.$$
In such coordinates we have: $\widehat{S}\cap U_{z}= \left\{ \widehat{x}_i=0=\widehat{y}_i ,\, \, i=m+1,...,0\right\}$ so that $\partial_{\tau}$ is tangent to $\widehat{S}$.
 Let $\zeta_t$ be the vector field associated to $\rho_t$, i.e: $\dfrac{d}{dt}\rho_t ={\zeta}_t \circ \rho_t$ for $t\in[0,1]$, with coordinate representation 
 $\sum_{j=1}^n \xi_t^j \partial_{x_j}+ \eta_t^j\partial_{y_j}$ in $U_{p_0}$. 
   $$\xi_t^j(p)=\dfrac{d}{ds}\rest{s=t}x_j \big{(} \rho_s(\rho_t^{-1}(p))\big{)}, \quad \eta_t^j(p)=\dfrac{d}{ds}\rest{s=t}y_j \big{(} \rho_s(\rho_t^{-1}(p))\big{)}.$$
Using the inclusions  $e_t:M \hookrightarrow M \times\{t\}\subset \widehat{M}$ at time $t$,   locally defined vector fields on $M$ can be considered on $\widehat{M}$ omitting (when the context is clear) further notations. In this sense:
\begin{equation}\label{coordinatefields}
d\left(\rho_t^{-1}\right)  \partial_{x_i}\rest{p}=\partial_{\,\widehat{x_i}}\rest{(p,t)}
\end{equation}
 and similar identities for $\widehat{y_j}$.
Since the vector field $\partial_t$ on $\widehat{M}$ is given by
$\partial_t \rest{(p,t)}=\dfrac{d}{ds}\rest{s=t} \gamma_p(s)$ with $\gamma_p(s)=(p,s)$, 
we have, over $U_z$ the formula:
  \begin{equation}\label{timetvectorfield}
  \partial_t\rest{(p,t)}=\partial_{\tau}+\sum_{j=1}^n \xi_t^j(\rho_t(p))\partial_{\,\widehat{x}_j}\rest{(p,t)}+\eta_t^j(\rho_t(p))\partial_{\widehat{y}_j}\rest{(p,t)}.\end{equation}
  We define, in $U_z$ the local Hamiltonian by:
 \beq 
 H_{z}(p,t)=\sum_{j=m+1}^n\eta^j_t(\rho_t(p))\,\widehat{x}_j(p,t)- \xi_t^j(\rho_t(p))\,\widehat{y}_j(p,t).
 \eeq 
 Clearly  $H_z$ vanishes on $U_z\cap \widehat{S},$ i.e. it satisfies condition (2) above. As for condition (1), since every $\rho_t$ is a symplectomorphism the Hamiltonian vector fields are $\rho_t$-related:
 $$V_{h_t} \rest p =d_q\left(\rho_t^{-1}\right) \left(V_{k_t} \rest q\right), \quad q=\rho_t(p),$$
 where $k_t$ is the local Hamiltonian function $k_t:U_{p_0}\longrightarrow \mathbb{R}$ defined by $k_t=\sum_{j=m+1}^n \eta_j^t\, x_j - \xi_j^t \,y_j$. For the Hamiltonian vector field of $k_t$ we compute
 $$V_{k_t} \rest{S_0}=-\sum_{j=m+1}^n
\left( \xi_j^t\, \partial_{x_j}+
  \eta_j^t\, \partial_{y_j}\right) \rest{S_0}.$$
It follows, using formulas \eqref{coordinatefields} and \eqref{timetvectorfield}  that at every point $(p,t)\in \widehat{S}\cap U_z$ we have:  
$$\left(V_{h_t}+ {\partial}_{\tau}\right)\rest{(p,t)}=\partial_{\tau} \rest{(p,t)}+ \sum_{j=1}^m \xi_t^j(\rho_t(p))\partial_{\,\widehat{x}_j}\rest{(p,t)}+\eta_t^j(\rho_t(p))\partial_{\widehat{y}_j}\rest{(p,t)},$$ 
 which is manifestly tangent to $\widehat{S}$.
 The proof is completed because the open sets $U_z$ in the form discussed above constitute an open covering of $\widehat{S}$.
  \end{proof}

\addtocontents{toc}{\protect\setcounter{tocdepth}{2}}

\bibliographystyle{acm}
\bibliography{literature.bib}

\end{document}